\tikzset{
every node/.style={circle, inner sep=2pt}
}
\newtheorem{theorem}{Theorem}
\newtheorem{lemma}[theorem]{Lemma}
\newtheorem{proposition}[theorem]{Proposition}
\newtheorem{corollary}[theorem]{Corollary}
\theoremstyle{definition}
\newtheorem{definition}[theorem]{Definition}
\newtheorem{observation}[theorem]{Observation}
\newtheorem{remark}[theorem]{Remark}
\newtheorem{example}[theorem]{Example}
\newtheorem{problem}{Problem}
\newtheorem{question}[theorem]{Question}
\def \I {\mathcal{I}}
\def \S {\mathcal{S}}
\def\Q{\ns Q}
\newcommand{\mytilde}{\raise.17ex\hbox{$\scriptstyle\mathtt{\sim}$}}
\def\j{\mbox{\boldmath $j$}}
\def\vecv{\mbox{\boldmath $v$}}
\def\vec0{\mbox{\boldmath $0$}}
\def\A{\mbox{\boldmath $A$}}
\def\B{\mbox{\boldmath $B$}}
\def\I{\mbox{\boldmath $I$}}
\def\S{\mbox{\boldmath $S$}}
\def\I{\mbox{\boldmath $I$}}
\def\Q{\mbox{\boldmath $Q$}}
\title{Note on the product of the largest \\and the smallest eigenvalue of a graph}
\author{A. Abiad\footnote{Department of Mathematics and Computer Science, Eindhoven University of Technology, The Netherlands, {\tt{a.abiad.monge@tue.nl }}} \and C. Dalf\'o\footnote{Departament  de Matem\`atica, Universitat de Lleida, Igualada (Barcelona), Catalonia, {\tt{cristina.dalfo@udl.cat}}} \and
M. A. Fiol\footnote{Departament de Matem\`atiques, Universitat Polit\`ecnica de Catalunya, Barcelona Graduate School of Mathematics, Institut de Matem\`atiques de la UPC-BarcelonaTech (IMTech),  Barcelona, Catalonia, {\tt{miguel.angel.fiol@upc.edu}}}
}
\date{}
\begin{document}

\maketitle

\begin{abstract}
In this note, we use eigenvalue interlacing to derive an inequality between the maximum degree of a graph and its maximum and minimum adjacency eigenvalues. The case of equality is fully characterized.
\end{abstract}

\section{Introduction}
Spectral graph theory seeks to deduce the structural properties of a graph from its spectrum. Eigenvalue interlacing provides a powerful tool for obtaining inequalities and regularity results concerning the structure of graphs in terms of eigenvalues of the adjacency matrix; see, for instance, Brouwer and Haemers \cite{bh11} and Haemers \cite{h95}. In addition, studying the cases of equality in such inequalities often provides interesting information on the structure of the graph.  

Let $G$ be a (connected) graph with $n$ vertices and adjacency matrix having eigenvalues $\lambda_1\geq \cdots \geq \lambda_n$. 
Not many bounds involving the product of the largest and smallest eigenvalue, $\lambda_1$ and $\lambda_n$, of a graph seem to exist. 
In fact, we are only aware of an inequality obtained by Haemers \cite[Theorem 3.3]{h95}, who provides an upper bound for the independence number $\alpha$ of a non-regular graph, extending the celebrated Hoffman ratio bound for regular graphs $\alpha\le n/[1-(\lambda_1/\lambda_n)]$ (see \cite[Theorem 3.2]{h95}). Namely, Haemers proved that, if $G$ is a graph with $n$ vertices, minimum degree $\delta$, and independence number $\alpha$, then
\begin{equation}
\label{haemers}
-\lambda_1\lambda_n\ge \frac{\alpha\delta^2}{n-\alpha}.
\end{equation}

Several papers, such as the one by Gregory, Hershkowitz, and Kirkland \cite{ghk01}, are on the graph spread, that is, $\lambda_1-\lambda_n$. There is also quite some work on bounding the sum $\lambda_1+\lambda_n$, see, for instance, Brandt \cite{B1998}, Csikv\'ari \cite{C2022} and Balogh, Clemen, Lidick\'y, Norin, and Volec \cite{BCLNV2023}. Moreover, bounds on $\lambda_1$ knowing $\lambda_n$, or the other way around, have been obtained by Rojo, Rojo, and Soto \cite{RRS1999}.

The structure of this note is as follows. We start Section \ref{sec:preliminaries} by providing some basic definitions and some known results, such as the interlacing theorem. In Section \ref{sec:inequality}, we present our main result, that is, an inequality involving only the maximum degree $\Delta$ of a graph and the product of its maximum and minimum adjacency eigenvalues. Moreover, the case of equality is fully characterized, and some infinite families satisfying it are provided. Finally, in Section \ref{sec:comparisson} we show that our bound can outperform the other known bound of a similar nature by Haemers \cite[Theorem 3.3]{h95}.

\section{Preliminaries}\label{sec:preliminaries}

Throughout this note, $G=(V,E)$ denotes a (simple and connected) graph with $n=|V|$ vertices, and adjacency matrix $\A$ with 
eigenvalues 
$$
\lambda_1(G)\ge \lambda_2(G)\ge \cdots \ge \lambda_n(G).
$$
Moreover, $G(u)$ stands for the set of vertices adjacent to a given vertex $u\in V$.

Given two square matrices $\A$ and $\B$ with eigenvalues $\lambda_1\geq \cdots \geq \lambda_n$
and 
$\mu_1 \geq \cdots \geq \mu_m$ (with $m<n$), we say that the second sequence {\em interlaces} the first if, for all $i = 1,\ldots,m$, it follows that
$\lambda_i \geq \mu_i \geq \lambda_{n-m+i}$.

A basic result about interlacing is the following (see Haemers \cite{h95}).
\begin{theorem}
	\label{theo-interlacing}
	Let $\S$ be a real $n \times m$ matrix such that $\S^T \S = \I$, and let $\A$ be an $n \times n$ matrix with eigenvalues $\lambda_1 \geq \cdots \geq \lambda_n$. Define $\B = \S^T \A \S$, and call its eigenvalues $\mu_1 \geq\cdots \geq \mu_m$. 
	\begin{enumerate}
		\item[$(i)$]
		The eigenvalues of $\B$ interlace those of $\A$.
		\item[$(ii)$]
		If $\mu_i = \lambda_i$ or $\mu_i = \lambda_{n-m+i}$, then there is an eigenvector $\vecv$ of $\B$ for $\mu_i$ such that $\S \vecv$ is an eigenvector of $\A$ for $\mu_i$.
		\item[$(iii)$]
		If there is an integer $k \in \{0,\ldots,m\}$ such that $\lambda_i = \mu_i$ for $1 \leq i \leq k$,  and $\mu_i = \lambda_{n-m+i}$ for $ k+1 \leq i \leq m$ $(${\em tight interlacing}$)$,  then $\S \B = \A \S$.
	\end{enumerate}
\end{theorem}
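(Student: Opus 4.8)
The plan is to derive all three parts from the Courant--Fischer (max--min and min--max) characterizations of the eigenvalues of a real symmetric matrix --- here $\A$, and hence $\B=\S^T\A\S$, are symmetric --- using the following consequence of $\S^T\S=\I$: the linear map $v\mapsto\S v$ is an isometry of $\RR^m$ into $\RR^n$ that transfers Rayleigh quotients, namely $\|\S v\|=\|v\|$ and $(\S v)^T\A(\S v)=v^T\B v$ for every $v\in\RR^m$. Consequently, if $W\subseteq\RR^m$ is a subspace, then $\S W\subseteq\RR^n$ has the same dimension, and the Rayleigh quotient $x^T\A x/x^Tx$ for $x\in\S W$ takes exactly the same set of values as $v^T\B v/v^Tv$ for $v\in W$.

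For $(i)$, let $W$ be the span of eigenvectors of $\B$ for $\mu_1,\dots,\mu_i$; then $\S W$ is an $i$-dimensional subspace of $\RR^n$ on which the $\A$-Rayleigh quotient is everywhere at least $\mu_i$, so the max--min characterization of $\lambda_i$ gives $\lambda_i\ge\mu_i$. Taking instead for $W$ the span of eigenvectors of $\B$ for $\mu_i,\dots,\mu_m$, the $\A$-Rayleigh quotient on the $(m-i+1)$-dimensional subspace $\S W$ is everywhere at most $\mu_i$, so the min--max characterization of $\lambda_{n-m+i}$ gives $\mu_i\ge\lambda_{n-m+i}$; this establishes the interlacing inequalities.

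For $(ii)$, assume first $\mu_i=\lambda_i$. Let $\S W$ be the $i$-dimensional subspace from the first half of $(i)$ (with $\A$-Rayleigh quotient $\ge\lambda_i$ on it), and let $E$ be the $(n-i+1)$-dimensional span of eigenvectors of $\A$ for $\lambda_i,\dots,\lambda_n$ (with $\A$-Rayleigh quotient $\le\lambda_i$ on it). Since $i+(n-i+1)>n$, there is a nonzero $x\in\S W\cap E$; then $x^T\A x/x^Tx=\lambda_i$, and expanding $x$ in the eigenvectors of $\A$ that span $E$ forces $x$ into the $\lambda_i$-eigenspace, so $\A x=\lambda_i x$. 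Writing $x=\S v$ with $v\ne 0$, we get $\B v=\S^T\A\S v=\lambda_i\,\S^T\S v=\mu_i v$, so $v$ and $\S v$ are the required eigenvectors. The case $\mu_i=\lambda_{n-m+i}$ follows by applying this argument to $-\A$ and $-\B$ with the index $m+1-i$.

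For $(iii)$, I would induct on $m$, using $(ii)$ to peel off one extreme eigenvalue at a time. If $k\ge 1$, then $\mu_1=\lambda_1$ and $(ii)$ provides a unit vector $v_1$ with $\B v_1=\mu_1 v_1$ and $x_1:=\S v_1$ an eigenvector of $\A$. Then $\A$ leaves $x_1^{\perp}$ invariant, $\B$ leaves $v_1^{\perp}$ invariant, $\S$ maps $v_1^{\perp}$ isometrically into $x_1^{\perp}$, and --- in orthonormal coordinates on these complements --- the compressions of $\A$ and $\B$ again satisfy the hypotheses of the theorem and still interlace tightly, now with the threshold $k$ replaced by $k-1$; by induction $\A\S=\S\B$ holds on $v_1^{\perp}$, and it holds on $v_1$ by construction, hence on all of $\RR^m$. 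If $k=0$, one peels $\mu_m=\lambda_n$ off from the bottom instead, and the base case $m\le 1$ is immediate. The main point requiring care in $(iii)$ is precisely this inheritance of the tight-interlacing pattern by the compressions: it is what allows the eigenvectors produced by $(ii)$ at the various indices to be taken mutually orthogonal even when $\A$ or $\B$ has repeated eigenvalues, and the peeling-from-the-extremes scheme is designed to keep that bookkeeping transparent.
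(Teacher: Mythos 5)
The paper does not prove this theorem at all: it is quoted as a known result with a citation to Haemers \cite{h95}, so there is no internal proof to compare against. Your argument is correct and is essentially the standard one from that reference: parts $(i)$ and $(ii)$ via Courant--Fischer together with the key identities $\|\S\vecv\|=\|\vecv\|$ and $(\S\vecv)^{\top}\A(\S\vecv)=\vecv^{\top}\B\vecv$, and part $(iii)$ by showing that a basis of eigenvectors $\vecv$ of $\B$ can be chosen so that each $\S\vecv$ is an eigenvector of $\A$ for the same eigenvalue. Haemers' write-up differs only cosmetically (he exhibits an explicit nonzero vector in an intersection of eigenspaces rather than invoking the max--min principle wholesale, and handles $(iii)$ by a direct induction on the index $i$); your peeling-off-the-extremes induction is a clean way to deal with the orthogonality bookkeeping when eigenvalues repeat, and you correctly verify the two facts that make it work, namely that $\S$ carries $\vecv_1^{\perp}$ isometrically into $(\S\vecv_1)^{\perp}$ and that the tight-interlacing pattern is inherited by the compressions with $k$ replaced by $k-1$.
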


Let $\A$ be the adjacency matrix of a graph $G=(V,E)$. We distinguish two interesting cases of eigenvalue interlacing  depending on the matrix $\B$: 

\begin{itemize}
\item[{\bf 1.}]
 	If $\B$ is a {\em principal submatrix} of $\A$, then $\B$ corresponds to the adjacency matrix of an induced subgraph
 	$G'$ of $G$.
\item[{\bf 2.}]
 	If for a given partition of the vertices of $G$, say $V=U_1\cup\cdots\cup U_m$, $\B$ is the so-called {\em quotient matrix} of $\A$. The entries of $\B$, which are denoted by
 	$b_{ij}$ for $i,j=1,\ldots,m$, are the average row sums of the corresponding blocks $\A_{ij}$ of $\A$.
 	
 Moreover, if the interlacing is tight,
 	Theorem~\ref{theo-interlacing}$(iii)$ reflects that $\S$ corresponds to a {\em
 		regular} (or {\em equitable}) partition of $\A$, that is, each
 	block of the partition has constant row and column sums. Then,
 	the bipartite induced subgraphs $G_{ij}$, with adjacency matrices $\A_{ij}$, for $i\neq j$, are biregular, and the subgraphs
 	$G_{ii}$ are regular.
 \end{itemize}

 The {\em cone graph} over a graph $H$ is obtained by adding a new vertex $v$ and joining it to all vertices of $H$. The following result is well known; see, for instance, Brouwer and Haemers \cite[ Ch. 1, Ex. 6]{bh11}.
 \begin{lemma}
 \label{lem1}
 Let $G=H'+v$ be the cone graph on $n$ vertices, over the $\delta$-regular (connected) graph $H'$ with $n-1$ vertices and eigenvalues $\theta_1(=\delta)>\theta_2\ge\cdots\ge \theta_{n-1}$. Then, $G$ has eigenvalues 
 $$
 \theta_2\ge\cdots\ge \theta_{n-1},\quad \mbox{together with}\quad \lambda_{1,2}=\frac{1}{2}\left(d\pm\sqrt{d^2-4(n-1}\right).
 $$
 \end{lemma}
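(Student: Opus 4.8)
The plan is to exhibit an explicit orthogonal basis of eigenvectors of the adjacency matrix $\A$ of $G$, obtained by separating the part of $\RR^n$ coming from $H'$ from the two remaining directions involving the cone vertex. Order the vertices of $G$ so that the new vertex $v$ comes last; then
$$
\A=\begin{pmatrix}\B & \vecj\\[2pt] \vecj\trans & 0\end{pmatrix},
$$
where $\B$ is the adjacency matrix of $H'$ and $\vecj$ is the all-ones vector of length $n-1$. Since $H'$ is connected and $\delta$-regular, $\B\vecj=\delta\vecj$ with $\theta_1=\delta$ a simple eigenvalue, so we may fix an orthogonal basis $\vecj=\vecu_1,\vecu_2,\dots,\vecu_{n-1}$ of $\RR^{n-1}$ consisting of eigenvectors of $\B$, where $\B\vecu_i=\theta_i\vecu_i$ and $\vecj\trans\vecu_i=0$ for every $i\ge 2$.

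First I would check that, for each $i\in\{2,\dots,n-1\}$,
$$
\A\binom{\vecu_i}{0}=\binom{\B\vecu_i}{\vecj\trans\vecu_i}=\binom{\theta_i\vecu_i}{0}=\theta_i\binom{\vecu_i}{0},
$$
so each $\theta_i$ with $2\le i\le n-1$ is an eigenvalue of $G$, with these $n-2$ pairwise orthogonal eigenvectors. Their orthogonal complement in $\RR^n$ is the plane $W=\langle\binom{\vecj}{0},\binom{\vec0}{1}\rangle$, and using $\B\vecj=\delta\vecj$ and $\vecj\trans\vecj=n-1$ one gets $\A\binom{\vecj}{0}=\delta\binom{\vecj}{0}+(n-1)\binom{\vec0}{1}$ and $\A\binom{\vec0}{1}=\binom{\vecj}{0}$. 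Hence $W$ is $\A$-invariant, and in the basis $\bigl(\binom{\vecj}{0},\binom{\vec0}{1}\bigr)$ the restriction $\A|_W$ is the matrix $\left(\begin{smallmatrix}\delta & 1\\ n-1 & 0\end{smallmatrix}\right)$ --- which is precisely the quotient matrix of $\A$ for the equitable partition $\{V(H'),\{v\}\}$, equitable because $H'$ is regular and $v$ is joined to every other vertex.

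It then remains to diagonalize this $2\times 2$ matrix: its characteristic polynomial is $x^2-\delta x-(n-1)$, whose roots are $\lambda_{1,2}=\tfrac12\bigl(\delta\pm\sqrt{\delta^2+4(n-1)}\bigr)$. Since $\RR^n$ is the orthogonal direct sum of $W$ and the span of $\binom{\vecu_2}{0},\dots,\binom{\vecu_{n-1}}{0}$, this accounts for all $n$ eigenvalues of $G$, namely $\theta_2\ge\cdots\ge\theta_{n-1}$ together with $\lambda_{1,2}$. As a sanity check, $\lambda_1=\tfrac12(\delta+\sqrt{\delta^2+4(n-1)})>\delta=\theta_1>\theta_2$, so $\lambda_1$ is the largest eigenvalue of $G$, as it must be by Perron--Frobenius since $G$ is connected.

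I do not expect any genuine obstacle: the argument is a routine block-matrix computation, together with the standard fact --- here proved on the spot via the $\A$-invariance of $W$ --- that the quotient matrix of an equitable partition contributes its eigenvalues to the ambient matrix. The only points deserving a line of justification are the existence of the eigenbasis $\vecu_1,\dots,\vecu_{n-1}$ with $\vecu_i\perp\vecj$ for $i\ge 2$ (which uses only that $\B$ is symmetric with $\B\vecj=\delta\vecj$) and the bookkeeping confirming that the exhibited vectors span all of $\RR^n$.
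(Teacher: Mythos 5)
Your proof is correct, and it is the standard argument that the paper itself does not spell out (the lemma is only cited as a known exercise from Brouwer and Haemers): split $\RR^n$ into the $(n-2)$-dimensional span of the vectors $\binom{\vecu_i}{0}$ with $\vecu_i\perp\vecj$, which carry the eigenvalues $\theta_2,\dots,\theta_{n-1}$ unchanged, and the $\A$-invariant plane spanned by $\binom{\vecj}{0}$ and $\binom{\vec0}{1}$, on which $\A$ acts as the quotient matrix of the equitable partition $\{V(H'),\{v\}\}$. All the block computations check out. One point worth flagging: your (correct) characteristic polynomial $x^2-\delta x-(n-1)$ yields $\lambda_{1,2}=\tfrac12\bigl(\delta\pm\sqrt{\delta^2+4(n-1)}\bigr)$, whereas the lemma as printed reads $\tfrac12\bigl(d\pm\sqrt{d^2-4(n-1)}\bigr)$; the statement contains typographical errors (the $d$ should be $\delta$ and the sign under the radical should be $+$), as is confirmed by the analogous formula $\tfrac12\bigl(\overline{\delta}\pm\sqrt{\overline{\delta}^2+4\Delta}\bigr)$ used with $\Delta=n-1$ in the proof of the paper's main theorem. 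So you have proved the correct version of the statement, not the misprinted one.
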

 Notice that $\lambda_1(>\delta)$ is the spectral radius of $G$, whereas, in principle,
 $\lambda_2$ is not necessarily the minimum eigenvalue of $G$.


\section{The new bound 
}
\label{sec:inequality}
In this section we present our main result: an upper bound on the product of the maximum and minimum eigenvalues of a graph. This bound is very useful when we only know information about the maximum degree of the graph.

\begin{theorem}
\label{theo:ineq}
Let $G$ be a graph on $n$ vertices with maximum degree $\Delta=\Delta(G)$ and eigenvalues 
$\lambda_{\max}(G)=\lambda_1(G)\ge\cdots \ge\lambda_n(G)=\lambda_{\min}(G)$. Then,
\begin{equation}
\label{ineq}
-\lambda_{\min}(G){\lambda_{\max}(G)}\ge \Delta.
\end{equation}
 Moreover, equality holds if and only if $G=H'+u$ is a cone graph over a $\delta$-regular graph $H'$ on $\Delta=n-1$ vertices (the degree of $u$) and 
minimum eigenvalue satisfying 
\begin{equation}
\label{cond-mu_n}
\lambda_{\min}(H')\ge \frac{1}{2}\left(\delta - \sqrt{\delta^2 + 4\Delta}\right).
\end{equation}
\end{theorem}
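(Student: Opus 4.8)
The plan is to apply the interlacing theorem (Theorem~\ref{theo-interlacing}) to a carefully chosen $2 \times 2$ quotient matrix and then extract the equality case from tight interlacing. Let $u$ be a vertex of $G$ of maximum degree $\Delta$, and consider the partition $V = U_1 \,\dot{\cup}\, U_2$ with $U_1 = \{u\}$ and $U_2 = V \setminus \{u\}$. The quotient matrix of $\A$ with respect to this partition is
\[
\B = \begin{pmatrix} 0 & \Delta \\ \dfrac{\Delta}{n-1} & b_{22} \end{pmatrix},
\]
where $b_{22}$ is the average row sum of the block of $\A$ indexed by $U_2$; since the $(1,2)$-entry is $\Delta$ and the $(2,1)$-entry is $\Delta/(n-1)$, the product of the eigenvalues of $\B$ is $\det \B = 0 \cdot b_{22} - \Delta \cdot \frac{\Delta}{n-1} = -\Delta^2/(n-1)$. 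By Theorem~\ref{theo-interlacing}$(i)$, the two eigenvalues $\mu_1 \ge \mu_2$ of $\B$ interlace those of $\A$, so $\mu_1 \le \lambda_{\max}(G)$ and $\mu_2 \ge \lambda_{\min}(G)$. Since $\mu_1 > 0 > \mu_2$ (the off-diagonal product is positive, forcing eigenvalues of opposite sign unless $\Delta=0$, a trivial case), we get $0 < \mu_1 \le \lambda_{\max}$ and $\lambda_{\min} \le \mu_2 < 0$, hence
\[
-\lambda_{\min}(G)\,\lambda_{\max}(G) \ge -\mu_2 \mu_1 = -\det\B = \frac{\Delta^2}{n-1} \ge \Delta,
\]
where the last inequality uses $\Delta \le n-1$. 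This proves \eqref{ineq}.

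For the equality case, note that equality in the final chain forces two things: first, $\Delta = n-1$, so $u$ is adjacent to every other vertex, i.e.\ $G = H' + u$ is a cone over the induced subgraph $H' = G - u$ on $n-1$ vertices; and second, $\mu_1 = \lambda_{\max}(G)$ and $\mu_2 = \lambda_{\min}(G)$ simultaneously, which is exactly the tight-interlacing hypothesis of Theorem~\ref{theo-interlacing}$(iii)$ (with $k=1$). Tight interlacing tells us the partition $\{U_1, U_2\}$ is equitable: in particular the block indexed by $U_2$ has constant row sums, meaning $H'$ is $\delta$-regular for some $\delta$. With $\Delta = n-1$, the quotient matrix becomes $\B = \begin{pmatrix} 0 & n-1 \\ 1 & \delta \end{pmatrix}$, whose eigenvalues are $\frac{1}{2}\bigl(\delta \pm \sqrt{\delta^2 + 4(n-1)}\bigr)$ --- consistent with Lemma~\ref{lem1}. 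So $\mu_2 = \frac{1}{2}\bigl(\delta - \sqrt{\delta^2 + 4\Delta}\bigr)$. Now the remaining content of equality is the demand $\lambda_{\min}(G) = \mu_2$: by Lemma~\ref{lem1} the spectrum of $G$ consists of $\mu_1, \mu_2$ together with the non-principal eigenvalues $\theta_2 \ge \cdots \ge \theta_{n-1}$ of $H'$, so $\lambda_{\min}(G) = \mu_2$ holds precisely when $\theta_{n-1} = \lambda_{\min}(H') \ge \mu_2$, which is condition \eqref{cond-mu_n}. Conversely, any such cone graph has $\mu_1 \mu_2 = -\Delta^2/(n-1) = -\Delta$ and realizes equality, so the characterization is an ``if and only if''.

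The main obstacle I anticipate is the bookkeeping in the equality argument: one must be careful that equality in $-\lambda_{\min}\lambda_{\max} \ge -\mu_1\mu_2$ does \emph{not} by itself force $\mu_1 = \lambda_{\max}$ and $\mu_2 = \lambda_{\min}$ individually --- a priori one could have $\lambda_{\max}$ larger and $-\lambda_{\min}$ correspondingly... no: since $0 < \mu_1 \le \lambda_{\max}$ and $0 < -\mu_2 \le -\lambda_{\min}$, the product inequality $-\mu_1\mu_2 \le -\lambda_{\max}\lambda_{\min}$ is equality \emph{iff} both factor inequalities are equalities. That observation, together with separately needing $\Delta^2/(n-1) = \Delta$, is what pins down both $\Delta = n-1$ and tight interlacing, and the rest follows from Theorem~\ref{theo-interlacing}$(iii)$ and Lemma~\ref{lem1}. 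A secondary point to verify is that $H'$ must be connected for Lemma~\ref{lem1} to apply as stated; if $H'$ is disconnected one argues directly that $\delta$-regularity of each component still yields the same quotient matrix and the same conclusion, or one notes $G$ itself is connected and handles $H'$ componentwise.
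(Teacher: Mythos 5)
There is a fatal error in your key chain of inequalities: the final step $\frac{\Delta^2}{n-1}\ge\Delta$ is equivalent to $\Delta\ge n-1$, so the fact $\Delta\le n-1$ that you invoke gives exactly the \emph{opposite} inequality, $\frac{\Delta^2}{n-1}\le\Delta$, with equality only when $\Delta=n-1$. Consequently, your quotient over the partition $\{u\}\,\dot{\cup}\,(V\setminus\{u\})$ of the whole graph only yields the weaker bound $-\lambda_{\min}(G)\lambda_{\max}(G)\ge\Delta^2/(n-1)$, which falls short of $\Delta$ whenever $u$ is not adjacent to every other vertex. The dilution happens in the $(2,1)$-entry of your quotient matrix: averaging the edges into $u$ over all $n-1$ remaining vertices gives $\Delta/(n-1)$ instead of $1$, and this is precisely what shrinks $|\det\B|$ from $\Delta$ to $\Delta^2/(n-1)$.

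The paper's proof avoids this by first passing to the subgraph $H$ induced on the closed neighbourhood $\{u\}\cup G(u)$. There every vertex of the second part is adjacent to $u$, the quotient matrix is $\left(\begin{smallmatrix}0&\Delta\\ 1&\overline{\delta}\end{smallmatrix}\right)$ with determinant exactly $-\Delta$, and interlacing gives $-\lambda_{\max}(H)\lambda_{\min}(H)\ge\Delta$; a second application of interlacing ($H$ is an induced subgraph of $G$) transfers this to $G$. Your equality analysis would then also need repair: besides tight interlacing for the quotient of $H$, one must rule out $H$ being a \emph{proper} induced subgraph of $G$, which the paper does via the strict monotonicity of the spectral radius under taking proper subgraphs of a connected graph. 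The parts of your argument that are sound --- the observation that the product inequality $-\mu_1\mu_2\le-\lambda_{\max}\lambda_{\min}$ splits into two factor equalities, and the use of Lemma~\ref{lem1} to translate $\lambda_{\min}(G)=\mu_2$ into condition \eqref{cond-mu_n} --- carry over to the corrected setting essentially unchanged.
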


\begin{proof}
Let $u$ be a vertex of $G$ with maximum degree $\Delta$, and consider the graph $H$ induced by the vertex set $\{u\}\cup G(u)$. Let us first prove that $H$ satisfies \eqref{ineq}. If $\A'$ is the adjacency matrix of the graph $H'=\langle G(u)\rangle$ induced by $G(u)$, the adjacency matrix $\A$ of $H$ is of the form
\begin{equation}
\label{partitionA}
\A=\left(
\begin{array}{cc}
0& \j \\
\j^{\top} & \A'\\
\end{array}
\right),
\end{equation}
where $\j$ is the all-1 vector with $\Delta$ entries. Then, its quotient matrix (where each block is replaced by the average sum of its arrows) is
$$
\Q'=\left(
\begin{array}{cc}
0& \Delta \\
1 & \overline{\delta}\\
\end{array}
\right),
$$
where $\overline{\delta}$ is the mean degree of $H'$. 
Since the eigenvalues of $\Q'$ are
$$
\lambda_{1,2}=\frac{1}{2}\left(\overline{\delta}\pm \sqrt{\overline{\delta}^2+4\Delta}\right)
$$
we have, by interlacing,
\begin{equation}
\label{interlac-1}
\lambda_{\max}(H)\ge \lambda_1>
\lambda_2\ge \lambda_{\min}(H).
\end{equation}
Then, noting that 
$$
\frac{-\Delta}{\lambda_1}=\frac{-2\Delta}{\overline{\delta}+ \sqrt{\overline{\delta}^2+4\Delta}}=\frac{1}{2}\left(\overline{\delta}- \sqrt{\overline{\delta}^2+4\Delta}\right)=\lambda_2,
$$
we have
\begin{equation}
\label{ineq-2}
-\frac{\Delta(H)}{\lambda_{\max}(H)}\ge -\frac{\Delta}{\lambda_1}=\lambda_2\ge \lambda_{\min}(H),
\end{equation}
and \eqref{ineq} follows.
In fact, as pointed out by Haemers (personal communication, 2024), this can be proved more directly by noting that
$$
-\lambda_{\max}(H)\lambda_{\min}(H)\ge -\lambda_1\lambda_2=-\det(\Q)=\Delta.
$$
Now, since $H$ is an induced subgraph of $G$, we have, again by interlacing, 
$$
\lambda_{\max}(G)\ge \lambda_{\max}(H)> \lambda_{\min}(H)\ge \lambda_{\min}(G).
$$
Thus, using \eqref{ineq-2},
\begin{equation}
\label{ineq-3}
-\frac{\Delta(G)}{\lambda_{\max}(G)}\ge -\frac{\Delta}{\lambda_{\max}(H)}\ge \lambda_{\min}(H)\ge \lambda_{\min}(G),
\end{equation}
and \eqref{ineq} holds.


 Now, the equality in \eqref{ineq} first implies  
equalities in \eqref{ineq-2}, that is $\lambda_{\max}(H)=\lambda_1$ and $\lambda_{\min}(H)=\lambda_2$. Consequently, we have tight interlacing, and \eqref{partitionA} corresponds to a regular partition of $H$. That is, $H=H'+v$ is a cone graph over the regular graph $H'$ with degree $\overline{\delta}=\delta$.
Second, assuming that $H$ is a proper subgraph of $G$, the equalities in \eqref{ineq-3} imply that $\lambda_{\max}(G)=\lambda_{\max}(H)$. However, this is impossible since the spectral radius of a proper subgraph of a graph $G$ is known to be smaller than the spectral radius of $G$ (see, for instance, Cvetkovi\'c,  Rowlinson and Simi\'c \cite[Prop. 3.1.10]{crs10}). Therefore, $G=H=H'+u$ is a cone graph, as claimed. Conversely, if $G=H'+v$ is a cone graph over a $\delta$-regular graph $H'$ with minimum eigenvalue satisfying \eqref{cond-mu_n}, by Lemma \ref{lem1}, we have that $\lambda_{\min}(H)=\lambda_{\min}(G)=\lambda_2=\frac{1}{2}\left(\delta - \sqrt{\delta^2 + 4\Delta}\right)$ and, hence, we get the equality in \eqref{ineq}.
\end{proof}

For example, \eqref{cond-mu_n} is satisfied when $G$ is a cone graph of the following type:
\begin{description}
\item[$(i)$] 
$G=K_n=K_{n-1}+u$, the complete graph on $n$ vertices.
\item[$(ii)$]
$G=S_{n-1}=(n-1)K_1+u$, the star graph with $n-1$ rays.
\item[$(iii)$]
$G=W_{n-1}=C_{n-1}+u$, the wheel graph with $n-1$ spokes with $n\ge 8$.
\item[$(iv)$] 
$G=O(k)+v$, where $O(k)$ is the odd graph of degree $k$. 
\item[$(v)$] $Q_k+u$, where $Q_k$ is the $k$-cube of dimension $k$ with $k\ge 7$. 
\end{description}

\vskip.5cm
Indeed, let $\phi(\delta,\Delta)=\frac{1}{2}\left(\delta - \sqrt{\delta^2 + 4\Delta}\right)$, and let us check that the condition \eqref{cond-mu_n} on $H'$ holds.
(Recall that $\Delta$ is used both for the degree of $u$, or maximum degree of $G$,
and the number of vertices of $H'$.)
\begin{description}
\item[$(i)$] The complete graph $H'=K_{n-1}$ on $\Delta=n-1$ vertices, has degree $\delta=n-2$ and its smallest eigenvalue is $\lambda_{\min}(H')=-1=\phi(n-2,n-1)$.

\item[$(ii)$] The graph $H'=(n-1)K_1$ with $\Delta=n-1$ isolated vertices  has degree $\delta=0$,
and its smallest eigenvalue is $\lambda_{\min}(H')=0\ge \phi(0,n-1)=-\sqrt{n-1}$.

\item[$(iii)$] The cycle graph $H'=C_{n-1}$ on $n-1$ vertices has eigenvalues 
 $
 2\cos\left(\frac{2k\pi}{n-1}\right)$ for $k=0,\ldots,n-2$.
 Since $W_3=H'+v\cong K_4$, we can assume that $n\ge 5$. For these values, the minimum of $2\cos\left(\frac{2k\pi}{n-1}\right)$ is when $k=(n/2)-1,n/2$ for even $n$, and when $k=(n-1)/2$ for odd $n$. In the fist case the minimum eigenvalue $\lambda_{\min}(H')=2\cos\left(\frac{(n-2)\pi}{n-1}\right)\ge \phi(2,n-1)=1-\sqrt{n}$, excepting the case for $n=6$. In the second case,  $\lambda_{\min}(H')=-2\ge 1-\sqrt{n}$ excepting when $n=5,7$. 

\item[$(iv)$] The Odd graph $H'=O(k)$ on $\Delta={{2k-1}\choose{k-1}}$ vertices, has degree $\delta=k$ and its smallest eigenvalue is $\lambda_{\min}(H')=-(k-1)\ge \phi(k,\Delta)$. Thus, the successive values of $\phi(k,\Delta)$ for $k=3,4,5,6,\ldots$ are (approximately) $-2$, $-4.245$, $-9$, $-18.702,\ldots$
Then, for instance, $G=P+v$, with $P=O(3)$ being the Petersen graph, has maximum degree $\Delta=10$ and different eigenvalues $5,1,-2$, so satisfying equality in \eqref{ineq}.

\item[$(v)$] The $k$-cube $H'=Q_k$ has $\Delta=2^k$ vertices,  degree $\delta=k$, and its smallest eigenvalue is $\lambda_{\min}(H')=-k\ge \phi(k,2^k)$ when $k\ge 7$.
\end{description}
 
From Theorem \ref{theo:ineq}, we obtain the following straightforward consequences.

\begin{corollary}\leavevmode
\begin{itemize}
\item[$(i)$]
If $G$ is a (non-trivial) regular graph, then its minimum eigenvalue satisfies $\lambda_{\min}\le -1$,
with equality if and only if $G$ is a complete graph.

\item[$(ii)$]
If $G$ is a bipartite graph with maximum degree $\Delta$ and spectral radius $\lambda_{\max}$, then
$\Delta\le \lambda_{\max}^2$, and equality holds if and only if $G=S_{\Delta}$.
\end{itemize}
\end{corollary}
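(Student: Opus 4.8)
The plan is to derive both parts directly from Theorem~\ref{theo:ineq}, by specialising the inequality \eqref{ineq} and its equality case to the two families in question; no new interlacing computation is needed.

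For part $(i)$, I would first note that a connected $d$-regular graph with $d\ge 1$ has $\lambda_{\max}=d=\Delta$, so dividing \eqref{ineq} by $d>0$ gives $-\lambda_{\min}\ge 1$, that is, $\lambda_{\min}\le -1$. For the equality case I invoke the characterisation in Theorem~\ref{theo:ineq}: if $\lambda_{\min}=-1$ then $G=H'+u$ is a cone over a $\delta$-regular graph $H'$ on $\Delta=n-1$ vertices. Since the apex $u$ has degree $n-1$ while every vertex of $H'$ has degree $\delta+1$ in $G$, regularity of $G$ forces $\delta=n-2$; the only $(n-2)$-regular graph on $n-1$ vertices is $K_{n-1}$, so $G=K_{n-1}+u=K_n$. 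Conversely $K_n$ is regular with $\lambda_{\min}=-1$ (equivalently, it is item $(i)$ of the list following Theorem~\ref{theo:ineq}), so equality holds.

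For part $(ii)$, I would use that a bipartite graph has a spectrum symmetric about $0$, hence $\lambda_{\min}=-\lambda_{\max}$ and \eqref{ineq} becomes $\lambda_{\max}^2\ge\Delta$. For the equality case, Theorem~\ref{theo:ineq} again gives that $G=H'+u$ is a cone over a $\delta$-regular graph $H'$ on $\Delta=n-1$ vertices; but in a cone graph $u$ is adjacent to every other vertex, so in any $2$-colouring of $G$ the whole of $V(H')$ lies in the colour class opposite to $u$, whence $H'$ is edgeless, i.e.\ $\delta=0$. Then $G=(n-1)K_1+u=S_{n-1}=S_{\Delta}$, and conversely $S_\Delta$ is bipartite with $\lambda_{\max}=\sqrt{\Delta}$, so $\lambda_{\max}^2=\Delta$; this is item $(ii)$ above.

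There is essentially no analytic obstacle here. The only points requiring a little care are: (a) checking that the forced cone graphs $K_n$ and $S_\Delta$ indeed satisfy condition \eqref{cond-mu_n}, which is already verified in items $(i)$ and $(ii)$ of the discussion after Theorem~\ref{theo:ineq}; and (b) making sure the ``non-trivial'' hypothesis in $(i)$ is used precisely to guarantee $d\ge 1$, so that division by $d$ is legitimate. Both are routine, and the corollary follows at once.
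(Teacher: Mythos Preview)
Your proposal is correct and matches the paper's approach: the paper does not write out a proof at all, merely stating that the corollary is a straightforward consequence of Theorem~\ref{theo:ineq}, and your argument is precisely the natural derivation the authors have in mind (using $\lambda_{\max}=\Delta$ for regular graphs and $\lambda_{\min}=-\lambda_{\max}$ for bipartite graphs, then reading off the equality case from the cone characterisation).
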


\section{Bounds comparison}\label{sec:comparisson}

When equality does not hold in \eqref{ineq}, it is interesting to compare our bound to the one by Haemers \eqref{haemers}. Apart from the fact that \eqref{ineq} is more `economical' in the sense that it uses only the maximum degree, in general, \eqref{ineq} gives a better bound for $-\lambda_1\lambda_n$ when $\alpha$ is not very large. Namely, from $\Delta\ge \alpha\delta^2/(n-\alpha)$, we obtain that \eqref{ineq} outperforms \eqref{haemers} when
$$
\alpha\le \frac{n}{1+\frac{\delta^2}{\Delta}}.
$$

\begin{table}[t]
\label{tab:ind1:tableprop}
\begin{center}
\begin{tabular}{|c|c|}
\hline
Order & Proportion \\
\hline
4&  2/4=0.5\\ 
5 & $14/19\approx 0.737$ \\
6 & $79/107\approx0.738$ \\
7 & $692/849 \approx0.815$ \\
8 & $9489/11100 \approx 0.855$ \\
\hline
\end{tabular}
\end{center}
  \caption{Proportion of small irregular graphs for which the new bound \eqref{ineq} outperforms the known bound \eqref{haemers}.}
\end{table}

In contrast, Haemers' bound \eqref{haemers} is better than \eqref{ineq} for the case of bipartite biregular graphs. In fact, we will show that, for such graphs, equality holds in \eqref{haemers}.
A bipartite graph $G=(V,E)$ with $V = V_1\cup V_2$ is {\em biregular} if all the vertices of the stable set $V_1$ have degree $k_1$, whereas all the vertices of $V_2$ have degree $k_2$. 
Then, notice that by counting in two ways the number of edges of $G$, we have
\begin{equation}
\label{k1n1=k2n2}
k_1n_1 = k_2n_2, 
\end{equation}
where $n_i = |V i|$, for $i = 1, 2$. Since $G$ is bipartite and biregular, its maximum and minimum eigenvalues are $\lambda_1=-\lambda_n=\sqrt{k_1k_2}$ (see, for example, \cite{adf2013} by the authors). Moreover,
assuming that $n_1\ge n_2$, its minimum degree is $\delta=k_1$, and its independence number  is $\alpha(G)=n_1$.
Therefore, using \eqref{k1n1=k2n2}, the bound in \eqref{haemers} become
$$
\frac{\alpha\delta^2}{n-\alpha}=\frac{n_1k_1^2}{n-n_1}=\frac{n_1k_1}{n_2}k_1=k_2k_1=-\lambda_1\lambda_n,
$$
as claimed. Thus, since the maximum degree is $\Delta=k_2$,
Haemers' bound \eqref{haemers} outperforms \eqref{ineq} except when $k_1=1$, which corresponds to the mentioned star graph $S_{k_2}$,
as a special case in Theorem \ref{theo:ineq}.

In fact, this can be seen as a particular case of the following result
by the authors \cite[Proposition 3.1]{adf2013}, which can be seen as the bipartite version of Haemers' bound. 
\begin{proposition}[\cite{adf2013}]
Let $G=(V_1\cup V_2,E)$ be a bipartite
graph with maximum and minimum eigenvalues $\lambda_1$ and $\lambda_n=-\lambda_1$.  Let $\overline{\delta}_i$ be the mean degree of the vertices in $V_i$, for $i=1,2$. Then,
\begin{equation}
\label{haemers-bip}
-\lambda_1\lambda_n=\lambda_1^2\ge \overline{\delta}_1\overline{\delta}_2,    
\end{equation}
with equality if, and only if, $G$ is biregular with degrees $k_1=\overline{\delta_1}$ and $k_2=\overline{\delta}_2$.
\end{proposition}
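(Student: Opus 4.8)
The plan is to follow the same quotient-matrix interlacing scheme used in the proof of Theorem~\ref{theo:ineq}, now applied to the bipartition $V=V_1\cup V_2$. Write the adjacency matrix in block form $\A=\left(\begin{smallmatrix}\O & \B\\ \B^{\top} & \O\end{smallmatrix}\right)$, where $\B$ is the $n_1\times n_2$ biadjacency matrix and $n_i=|V_i|$. Since $G$ is bipartite the two diagonal blocks are zero, and the average row sums of the off-diagonal blocks are precisely the mean degrees $\overline{\delta}_1$ and $\overline{\delta}_2$, so the quotient matrix of $\A$ for this partition is
\[
\Q=\begin{pmatrix} 0 & \overline{\delta}_1\\ \overline{\delta}_2 & 0\end{pmatrix},
\qquad\text{with eigenvalues}\qquad \mu_{1,2}=\pm\sqrt{\overline{\delta}_1\overline{\delta}_2}.
\]
Equivalently, letting $\S$ be the $n\times 2$ matrix whose columns are the normalized characteristic vectors of $V_1$ and $V_2$ (so that $\S^{\top}\S=\I$), the symmetric matrix $\S^{\top}\A\S$ has the same spectrum $\{\mu_1,\mu_2\}$. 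By Theorem~\ref{theo-interlacing}$(i)$, $\lambda_1\ge\mu_1\ge 0\ge\mu_2\ge\lambda_n$, and hence
\[
-\lambda_1\lambda_n\ \ge\ -\mu_1\mu_2\ =\ -\det\Q\ =\ \overline{\delta}_1\overline{\delta}_2,
\]
which, together with $\lambda_n=-\lambda_1$, is exactly \eqref{haemers-bip}. (Alternatively, the inequality is immediate from the Rayleigh quotient of the vector equal to $\sqrt{n_2}$ on $V_1$ and to $\sqrt{n_1}$ on $V_2$, whose value equals $\sqrt{\overline{\delta}_1\overline{\delta}_2}$.)

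For the equality case we may assume that $G$ has at least one edge, the edgeless case being trivial; then $\mu_1>0>\mu_2$. If equality holds in \eqref{haemers-bip}, then $\lambda_1^2=\mu_1^2$ and, since $\lambda_1,\mu_1\ge 0$, we get $\mu_1=\lambda_1$; consequently $\mu_2=-\mu_1=\lambda_n$, so we are in the tight interlacing situation of Theorem~\ref{theo-interlacing}$(iii)$ with $k=1$. By Theorem~\ref{theo-interlacing}$(iii)$, $\{V_1,V_2\}$ is then a regular (equitable) partition of $\A$: each block has constant row sums. In particular every vertex of $V_1$ has degree $\overline{\delta}_1=:k_1$ and every vertex of $V_2$ has degree $\overline{\delta}_2=:k_2$, so $G$ is biregular with these degrees. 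Conversely, if $G$ is biregular with degrees $k_1,k_2$, then $\{V_1,V_2\}$ is equitable with quotient matrix $\left(\begin{smallmatrix}0 & k_1\\ k_2 & 0\end{smallmatrix}\right)$, whose Perron root $\sqrt{k_1k_2}$ is an eigenvalue of $\A$; the corresponding lifted eigenvector, equal to $\sqrt{k_1}$ on $V_1$ and to $\sqrt{k_2}$ on $V_2$, is strictly positive, so by Perron--Frobenius $\lambda_1=\sqrt{k_1k_2}$. Hence $-\lambda_1\lambda_n=\lambda_1^2=k_1k_2=\overline{\delta}_1\overline{\delta}_2$, and equality holds.

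I expect the main obstacle to be the equality analysis rather than the inequality, which is essentially one line once $\Q$ is written down. In the forward direction the key observation is that, with only $m=2$ strata, the single equality $\mu_1=\lambda_1$ already forces tight interlacing: since $\mu_2=-\mu_1$ and $\lambda_n=-\lambda_1$, it also follows that $\mu_2=\lambda_n$, so Theorem~\ref{theo-interlacing}$(iii)$ applies and delivers the equitable, hence biregular, structure. In the converse direction the only subtlety is to argue that the Perron root $\sqrt{k_1k_2}$ of the quotient matrix is actually $\lambda_1$ and not merely some eigenvalue of $\A$; this is forced by the strict positivity of the lifted eigenvector via Perron--Frobenius, and it does not require $G$ to be connected, because biregularity is imposed on all of $V_1$ and on all of $V_2$. (One could also sidestep this point by invoking the known value $\lambda_1=\sqrt{k_1k_2}$ for the spectral radius of a bipartite biregular graph.)
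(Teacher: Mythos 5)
Your proof is correct: the quotient matrix of the bipartition is $\left(\begin{smallmatrix}0 & \overline{\delta}_1\\ \overline{\delta}_2 & 0\end{smallmatrix}\right)$, interlacing gives $\lambda_1\ge\sqrt{\overline{\delta}_1\overline{\delta}_2}$, and the equality analysis via tight interlacing (Theorem~\ref{theo-interlacing}$(iii)$) and the positive lifted eigenvector is sound. The paper itself states this proposition without proof, citing \cite{adf2013}, and your argument is exactly the quotient-matrix interlacing scheme that the paper's Section~\ref{sec:preliminaries} sets up and that the cited source uses, so there is nothing substantive to compare beyond noting that your treatment of the disconnected case is unnecessary here since the paper assumes $G$ connected throughout.
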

Then, if $G$ has minimum degree $\delta$ and maximum degree $\Delta$, \eqref{haemers-bip} (or \eqref{haemers} with $\alpha=|V_1|\ge |V_2|$) and \eqref{ineq} imply that, for a bipartite graph, 
$$
-\lambda_1\lambda_n=\lambda_1^2\ge\max\{\delta^2,\Delta\}.
$$
\subsection*{Acknowledgments}
The authors thank the useful comments made by Willem Haemers, which helped to improve this paper.
A. Abiad is supported by the Dutch Research Council (NWO) through the grant VI.Vidi.213.085. 
The research of C. Dalf\'o and M. A. Fiol is supported by
AGAUR from the Catalan Government via the project 2021SGR00434 and MICINN from the Spanish Government via the project PID2020-115442RB-I00.
The research of M. A. Fiol is also supported by the  Universitat Polit\`ecnica de Catalunya via the grants AGRUPS-2022 and AGRUPS-2023.

\end{document}